\newtheorem{theorem}{Theorem}
\newtheorem{proposition}{Proposition}
\newtheorem{remark}{Remark}
\newtheorem{lemma}{Lemma}
\begin{document}
	
\title{Exact solution for a quantum SIR model\thanks{This 
is a preprint of a paper published in 'Journal of Mathematical Sciences'
at https://doi.org/10.1007/s10958-025-08181-6}}

\author{Márcia Lemos-Silva$^1$\\
{\tt marcialemos@ua.pt}
\and
Sandra Vaz$^2$\\ 
{\tt svaz@ubi.pt}
\and 
Delfim F. M. Torres$^1$\thanks{Corresponding author.}\\
{\tt delfim@ua.pt}}

\date{$^1$\text{Center for Research and Development in Mathematics and Applications (CIDMA),}\\ 
Department of Mathematics, University of Aveiro,\\ 
3810-193 Aveiro, Portugal\\[0.3cm]
$^2$Center of Mathematics and Applications (CMA-UBI),\\ 
Department of Mathematics, University of Beira Interior,\\ 
6201-001 Covilh\~{a}, Portugal}

\maketitle


\begin{abstract}
Based on the classical continuous system initially proposed by Bailey in 
1975, we present a novel Susceptible--Infected--Removed (SIR) model defined in quantum 
time, where the temporal evolution is governed by a non-uniform time grid. An explicit 
analytical solution is derived, and the long-term behavior of the susceptible, infected, 
and removed individuals is analyzed. Moreover, we prove the model preserves dynamic 
consistency with its continuous counterpart, as evidenced by the non-negativity of 
solutions and their corresponding qualitative agreement with the continuous dynamics. 
All results are further supported by illustrative examples.

\medskip

\noindent {\bf Keywords:} quantum SIR model, exact solution, asymptotic behavior.

\noindent {\bf MSC 2020:} 81Q80 (primary), 92D30 (secondary).
\end{abstract}


\section{Introduction}

The Susceptible--Infected--Removed (SIR) model is well-established 
and has been extensively studied over the years \cite{MR4795685}. 
Despite its long history, it continues to be of current great interest, 
since it provides a solid foundation for understanding and predicting 
the spread of diseases within populations 
\cite{MR4755935,MR4750629,MR4754098}.

In \cite{bailey}, Norman Bailey proposed the following SIR model:
\begin{equation}
\label{continuous}
\begin{cases}
x' = -\dfrac{b x y}{x+y}, \\
y' = \dfrac{b x y}{x+y} - cy, \\
z' = cy,
\end{cases}
\end{equation}
where $b, c \in \mathbb{R}^+$, and where $x(t_0) = x_0$, $y(t_0) = y_0$, and $z(t_0) = z_0$ 
are non-negative and, moreover, $x, y, z : \mathbb{R} \rightarrow \mathbb{R}^+_0$. This model 
adopts a well-known compartmental approach, wherein three variables represent different groups 
within the population. The variable $x$ represents the proportion of susceptible individuals, 
$y$ represents the proportion of infected individuals, 
and $z$ represents the proportion of removed individuals. 
By adding the right-hand side of equations \eqref{continuous}, one has $x'(t) + y'(t) + z'(t) = 0$, 
which is equivalent to $x(t) + y(t) + z(t) = N$, for all $t$, where the constant $N:= x_0 + y_0 + z_0$  
denotes the total population under study. The exact solution of \eqref{continuous} 
was found in \cite{gleissner:solution}. More recently, a different method was proposed 
to solve \eqref{continuous}, generalizing the result of \cite{gleissner:solution} 
to non-autonomous cases \cite{scales:solution}.

Quantum calculus, also known as $q-$calculus, is a branch of mathematics that extends 
the traditional calculus to a framework that incorporates $q-$analogues, that is, 
mathematical structures that replace certain variables with $q-$deformations, 
where $q$ is a parameter that controls the deformation \cite{quantum,quantumVC}. 
In the context of quantum systems 
or quantum phenomena, mathematical modeling often requires specialized mathematical frameworks 
that can account for unique properties of quantum mechanics. Mathematical models built within 
quantum calculus can be used to study a wide range of phenomena, such as quantum entanglement, 
quantum information, quantum statistical mechanics, and quantum field theory. These models aid 
in understanding the behavior of quantum systems, predicting their properties.

Here, we propose and investigate a SIR model formulated in quantum time. 
One of the main motivations for considering such a model lies in the non-uniform 
structure of the quantum time scale. Unlike the standard discrete-time framework, 
where time progresses in uniform steps, quantum time evolves on a increasing step. 
This means that during the early stages of an epidemic, where dynamics tend 
to be more sensitive, the step is finer, turning to a coarser one as the system 
stabilizes. Quantum modeling offers a more natural 
discretization for many real-world phenomena, where the initial dynamics 
changes rapidly before approaching an equilibrium. To the best of our knowledge, 
no SIR model has yet been formulated in quantum time, making our approach 
a novel contribution to the field. Thus, regarding applications, 
the quantization of infectious disease modeling may
effectively address some limitations of traditional continuous 
compartmental models by introducing a novel approach to understand disease dynamics.
Furthermore, unlike many SIR models, our proposed model has an exact solution. 
Finally, we prove that all its solutions remain non-negative and that their long-term behavior 
are coherent with \eqref{continuous}, which guarantees consistency.

To describe the dynamics of the model, we make use of the $q$-derivatives 
defined in quantum calculus as
\begin{equation}
\label{derivative}
D_qf(x) = \frac{f(qx) - f(x)}{(q-1)x},
\end{equation} 
where $D_qf$ denotes the $q$-derivative of function $f$. Moreover, $f(x)$ and $f(qx)$ represent 
the value of the function at $x$ and $qx$, respectively, and $(q-1)x$ denotes the step size that, as usual, 
measures the interval between $x$ and $qx$. Note that, when $q$ approaches 1, 
the $q$-derivative recovers its classical counterpart. 

The paper is organized as follows.  In Section~\ref{sec2}, we introduce the SIR model using 
$q-$derivatives and derive the explicit $q-$model. Since our goal is to prove dynamic consistency, 
we begin by proving the non-negativity and boundedness of solutions. 
In the same section, we find the equilibrium points and derive the exact solution. 
Section~\ref{sec2} ends with the asymptotic behavior of the solution. 
We end our work with Section~\ref{sec3}, providing some examples 
that illustrate the obtained results. 


\section{Main Results}
\label{sec2}

To obtain the quantum system, we extend the Mickens method \cite{mickens:book}
to the framework of quantum calculus \cite{quantum,quantumVC}. 
Such method is a Nonstandard Finite Difference (NSFD) scheme that, unlike classical methods, such as Euler 
and Runge--Kutta, ensures dynamic consistency \cite{stuart:consistent}. Developed by Mickens, this method 
presents some rules specifically designed to avoid the instabilities and inconsistencies common 
in standard discretization methods \cite{mickens:book,mickens,mickens1}. This method has already 
been successfully applied to various systems, e.g., \cite{applied:mickens,mickens:applied,vaz:torres,vaz:torres:sica}. 
However, in our work, since we are using the concept of $q$-derivatives for the approximation of derivatives, 
we follow only one of Mickens' rules, which states that the linear and nonlinear terms of the system may need 
to be replaced by nonlocal terms. As we shall see, this rule will be crucial for proving the non-negativity 
and boundedness of solutions.

Consider the quantum SIR model of the form 
\begin{equation}
\label{quantum}
\begin{cases}
D_qx(t) = -\dfrac{bx(qt)y(t)}{x(t) + y(t)},\\
D_qy(t) = \dfrac{bx(qt)y(t)}{x(t) + y(t)} - cy(qt),\\
D_qz(t) = cy(qt),
\end{cases}
\end{equation}
where $D_qx(t)$, $D_qy(t)$ and $D_qz(t)$ represent the $q$-derivatives with respect to $x$, $y$ and $z$, 
respectively, and $qt$ denotes the moment following $t$. Moreover, $x(t_0) > 0$, $y(t_0) > 0$ 
and $z(t_0) \geq 0$, and $x, y : \mathbb{R}^+ \rightarrow \mathbb{R}^+_0$, 
$z: \mathbb{R}^+_0 \rightarrow \mathbb{R}^+_0$, $b, c \in \mathbb{R}^+$ and $q > 1$.
To compare our model \eqref{quantum} with the ones available in the literature
one can take the limit when $q$ tends to one from the right. 
In the limit, when $q \rightarrow 1^+$, model \eqref{quantum} converges to the
classical continuous SIR model \eqref{continuous}.

Substituting \eqref{derivative} in \eqref{quantum}, we obtain \eqref{quantum} in the form
\begin{equation}
\label{quantum:explicit}
\begin{cases}
x(qt) = \dfrac{x(t)(x(t) + y(t))}{x(t) + y(t)(1 + b(q-1)t)},\\ 
y(qt) = \dfrac{y(t)(1 + b(q-1)t)(x(t) + y(t))}{(1 + c(q-1)t)(x(t) + y(t)(1 + b(q-1)t))},\\
z(qt) = \dfrac{c(q-1)t y(t)(1 + b(q-1)t)(x(t) + y(t))}{(1 + c(q-1)t)(x(t) + y(t)(1 + b(q-1)t))} + z(t).
\end{cases}
\end{equation}


\subsection{Non-negativity and boundedness of solutions}
\label{subsec1}

Ensuring the non-negativity of solutions and maintaining their boundedness 
are both crucial properties in epidemiological models. 
Follows our first result.

\begin{proposition}
If the initial conditions are non-negative and the parameter values 
of the system \eqref{quantum:explicit} are positive, then all the 
solutions remain positive for all $t >t_0$. Moreover, the quantum model 
\eqref{quantum} guarantees that the population remains constant over time.
\end{proposition}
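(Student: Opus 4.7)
The plan is to exploit the explicit one--step updates already derived in \eqref{quantum:explicit} and argue by induction along the quantum grid $\{q^{n}t_{0}:n\in\mathbb{N}_{0}\}$, which is the natural time scale on which \eqref{quantum} propagates. For the base case, the hypotheses give $x(t_{0})>0$, $y(t_{0})>0$, $z(t_{0})\geq 0$. For the inductive step, assume $x(t)>0$, $y(t)>0$, $z(t)\geq 0$ at some grid point $t=q^{n}t_{0}>0$, and read off the updates at $qt$ from \eqref{quantum:explicit}.

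The positivity of $x(qt)$ and $y(qt)$ then reduces to inspecting the signs of the factors appearing in each fraction. Since $q>1$, $b,c>0$ and $t>0$, the quantities $(q-1)t$, $1+b(q-1)t$ and $1+c(q-1)t$ are all strictly positive (in fact greater than $1$ for the latter two). Hence the numerators of $x(qt)$ and $y(qt)$ are products of strictly positive numbers, and the denominators $x(t)+y(t)(1+b(q-1)t)$ and $(1+c(q-1)t)(x(t)+y(t)(1+b(q-1)t))$ are also strictly positive, so no division by zero occurs and both values are $>0$. For the removed class, $z(qt)$ is the sum of $z(t)\geq 0$ and a fraction whose numerator and denominator are strictly positive by the same argument, yielding $z(qt)>0\geq 0$. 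Induction then gives positivity of $x$ and $y$ and non--negativity of $z$ at every grid point $t>t_{0}$.

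For the conservation statement, the plan is to add the three equations of the implicit system \eqref{quantum} (rather than \eqref{quantum:explicit}) and use linearity of $D_{q}$:
\begin{equation*}
D_{q}x(t)+D_{q}y(t)+D_{q}z(t)=-\frac{bx(qt)y(t)}{x(t)+y(t)}+\frac{bx(qt)y(t)}{x(t)+y(t)}-cy(qt)+cy(qt)=0.
\end{equation*}
Invoking the definition \eqref{derivative}, this identity is equivalent to $(x+y+z)(qt)=(x+y+z)(t)$ for every $t$ on the quantum grid. Iterating from $t_{0}$ produces $x(t)+y(t)+z(t)=x_{0}+y_{0}+z_{0}=N$ for all $t\in\{q^{n}t_{0}\}$, which is the asserted conservation of the total population.

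No step of this argument presents a real obstacle: everything is algebraic sign--checking plus telescoping of the sum $D_{q}x+D_{q}y+D_{q}z$. The only point requiring mild care is to notice that the non--local Mickens--type choice of arguments ($x(qt)$ in the infection term and $y(qt)$ in the removal term) is exactly what makes the denominators in \eqref{quantum:explicit} strictly positive and what makes the telescoping in the conservation step exact; with a local discretization, positivity and conservation would not both be automatic.
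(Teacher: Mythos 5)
Your proposal is correct and follows essentially the same route as the paper: positivity by sign inspection of the explicit one-step updates in \eqref{quantum:explicit}, and conservation by summing the right-hand sides of \eqref{quantum} so that $D_qN=0$. You simply make explicit (via induction along the grid and via unpacking the definition of $D_q$) the steps the paper treats as immediate.
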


\begin{proof}
Let $(x(t_0), y(t_0), z(t_0)) \in \mathbb{R}^3_+$. Since all parameters are positive 
and, by definition, $q>1$, then all equations of system \eqref{quantum:explicit} 
are positive. Thus, the non-negativity condition is trivially satisfied.
Now, let the total population be defined as $N(t) = x(t) + y(t) + z(t)$. 
Adding the right-hand side of equations \eqref{quantum}, we have $D_qN(t) = 0$, 
which means, by the properties of the quantum derivative \cite{quantum,quantumVC}, 
that the population $N(t)$ remains constant along time $t$. 
\end{proof}


\subsection{Equilibrium points}
\label{subsec2}

To have dynamical consistency, the equilibrium points of the quantum model 
have to be the same of the continuous model \eqref{continuous}.

Equating all the $q$-derivatives to zero, it follows easily that the equilibrium 
points of system \eqref{quantum} are given by $(\alpha, 0, N - \alpha)$, 
with $\alpha \geq 0$. This is coherent with the equilibrium points 
of the continuous-time model \eqref{continuous} stated in \cite{bailey}.


\subsection{Exact solution}
\label{subsec3}

Here, we derive the exact solution of system \eqref{quantum:explicit}. 
Note that, since $N = x(qt) + y(qt) + z(qt)$ for all $t$, it means it is 
enough to solve the first two equations of system \eqref{quantum:explicit}: 
after we know $x(t)$ and $y(t)$, we can immediately compute $z(qt)$ 
using the equality $z(qt) = N - x(qt) - y(qt)$.

\begin{remark}
In the following results, we use the notation $x(t_n)$, 
where $n \in \mathbb{N}_0$. By definition, 
$t_1 = qt_0$, $t_2 = qt_1 = q^2t_0$, \ldots, i.e., $t_n = q^nt_0$. 
\end{remark}

\begin{remark}
Let $x_0,x_1,x_2,\dots$ be a sequence. Then, by convention, the product 
$\prod_{i = n}^m x_i$ is equal to 1 when $n > m$. 
\end{remark}

\begin{theorem}
The exact solution of system \eqref{quantum:explicit} is given by
\begin{equation}
\label{solution}
\begin{cases}
x(t_n) = \displaystyle \frac{x(t_0)\left(\bar{\kappa} + 1\right)}{\bar{\kappa}
+1+b(q-1)t_0} \displaystyle\prod_{i=0}^{n-2} \frac{1 + \bar{\kappa}
\prod_{j=0}^{i} \xi_j}{1 + b(q-1)q^{i+1}t_0 + \bar{\kappa}\prod_{j=0}^{i} \xi_j},\\ \\
y(t_n) = \frac{y(t_0)}{\xi_0}\frac{\bar{\kappa} + 1}{\bar{\kappa}
+1+b(q-1)t_0}\displaystyle\prod_{i=0}^{n-2} \left(\frac{1}{\xi_{i+1}}
\cdot\frac{1 + \bar{\kappa}\prod_{j=0}^{i} \xi_j}{1 + b(q-1)q^{i+1}t_0 
+ \bar{\kappa}\prod_{j=0}^{i} \xi_j}\right),\\ \\
z(t_n) = N - \left(x(t_0) + \frac{y(t_0)}{\xi_0}\displaystyle\prod_{i=0}^{n-2} 
\left(\frac{1}{\xi_{i+1}}\right)\right)\frac{\bar{\kappa} + 1}{\bar{\kappa}
+1+b(q-1)t_0} \displaystyle\prod_{i=0}^{n-2} \frac{1 + \bar{\kappa}
\prod_{j=0}^{i} \xi_j}{1 + b(q-1)q^{i+1}t_0 + \bar{\kappa}\prod_{j=0}^{i} \xi_j},
\end{cases}
\end{equation}
where $\bar{\kappa} = \frac{x(t_0)}{y(t_0)}$ 
and $\xi_k = \frac{1 + c(q-1)q^kt_0}{1 + b(q-1)q^kt_0}$, 
$k \in \{0,1,\dots, n-1\}$.
\end{theorem}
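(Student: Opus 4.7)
\medskip

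\noindent\textbf{Proof plan.} The plan is to prove the formula by induction on $n$, the engine being a decoupling of the first two equations of \eqref{quantum:explicit} via the ratio $\kappa(t) := x(t)/y(t)$. Dividing the first equation of \eqref{quantum:explicit} by the second and simplifying the common factor $(x(t)+y(t))/\left(x(t)+y(t)(1+b(q-1)t)\right)$, one obtains the simple scalar recursion
\begin{equation*}
\kappa(qt) \;=\; \kappa(t)\,\frac{1+c(q-1)t}{1+b(q-1)t}.
\end{equation*}
Evaluating at $t=t_k=q^k t_0$ and iterating yields the closed form
$\kappa(t_n)=\bar\kappa\prod_{j=0}^{n-1}\xi_j$, with the convention of the second remark handling $n=0$.

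Next, I would rewrite the first equation of \eqref{quantum:explicit} at $t=t_n$ by dividing numerator and denominator by $y(t_n)$, giving
\begin{equation*}
x(t_{n+1})\;=\;x(t_n)\,\frac{\kappa(t_n)+1}{\kappa(t_n)+1+b(q-1)t_n}.
\end{equation*}
Substituting the already obtained expression for $\kappa(t_n)$ and iterating from $k=0$ up to $k=n-1$ produces a telescoping product which, after separating the $k=0$ factor (where the inner product $\prod_{j=0}^{-1}\xi_j=1$ by convention) from the factors for $k\geq 1$ (relabel $i=k-1$), yields precisely the claimed formula for $x(t_n)$. The formula for $y(t_n)$ then follows immediately from $y(t_n)=x(t_n)/\kappa(t_n)$ and the identity $x(t_0)/\bar\kappa=y(t_0)$, noting that $\prod_{j=0}^{n-1}\xi_j=\xi_0\prod_{i=0}^{n-2}\xi_{i+1}$, so that the factor $1/\kappa(t_n)$ distributes as $1/\xi_0$ outside the product and $1/\xi_{i+1}$ inside it. Finally, $z(t_n)=N-x(t_n)-y(t_n)$ by the conservation law established in the previous subsection, and factoring the common expression $\frac{\bar\kappa+1}{\bar\kappa+1+b(q-1)t_0}\prod_{i=0}^{n-2}(\cdots)$ out of $x(t_n)+y(t_n)$ gives the stated form of $z(t_n)$.

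The calculations are elementary; the only real obstacle is bookkeeping. The indexing of the outer product (running to $n-2$) together with the conventions $\prod_{j=0}^{-1}\xi_j=1$ and $\prod_{i=0}^{-1}(\cdots)=1$ must be handled carefully so that the base cases $n=0$ (returning the initial conditions) and $n=1$ (reducing to a single application of \eqref{quantum:explicit}) come out correctly, and so that the shift of summation variable $k\mapsto i+1$ in the $x$-recursion is consistent with the shift $j\mapsto j+1$ appearing inside the definition of $\kappa(t_n)$. Performing the induction step cleanly then reduces to checking that multiplying the formula for $x(t_n)$ by $\frac{\kappa(t_n)+1}{\kappa(t_n)+1+b(q-1)t_n}$ extends the product by exactly one factor, an algebraic identity that follows directly from the closed form of $\kappa(t_n)$.
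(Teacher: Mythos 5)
Your proposal is correct, and it reaches the formula by a genuinely different (and arguably more illuminating) route than the paper. The paper proceeds by direct verification: it checks $n=1$, assumes the stated formulas at $n=m$, and in the inductive step extracts the key identity $\bar{\kappa}\prod_{j=0}^{m-1}\xi_j = x(t_m)/y(t_m)$ \emph{from the assumed formulas themselves} (its equation \eqref{aux:k}), then confirms that multiplying by the next factor reproduces the recursion \eqref{quantum:explicit}. You instead establish that same identity \emph{a priori}, by observing that the ratio $\kappa(t)=x(t)/y(t)$ satisfies the decoupled scalar recursion $\kappa(qt)=\kappa(t)\,\frac{1+c(q-1)t}{1+b(q-1)t}$, i.e.\ $\kappa(t_{k+1})=\kappa(t_k)\xi_k$, which solves immediately to $\kappa(t_n)=\bar{\kappa}\prod_{j=0}^{n-1}\xi_j$. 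Feeding this into $x(t_{k+1})=x(t_k)\frac{\kappa(t_k)+1}{\kappa(t_k)+1+b(q-1)t_k}$ and telescoping then \emph{derives} the product formula rather than merely verifying it, and $y=x/\kappa$ and $z=N-x-y$ fall out as you describe. What your approach buys is an explanation of where the formula comes from and why the ratio $\bar{\kappa}\prod\xi_j$ appears; what the paper's approach buys is that nothing needs to be guessed about the structure of the answer beyond the stated theorem. The index bookkeeping you flag (separating the $k=0$ factor, relabelling $k=i+1$, and splitting $\prod_{j=0}^{n-1}\xi_j=\xi_0\prod_{i=0}^{n-2}\xi_{i+1}$) all checks out against the stated formulas.

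One small correction: the stated formulas are valid for $n\geq 1$, not $n=0$. At $n=0$ the outer products are empty but the prefactor $\frac{\bar{\kappa}+1}{\bar{\kappa}+1+b(q-1)t_0}$ survives, so the right-hand side does not reduce to $x(t_0)$; your telescoping naturally starts the product at the step $t_0\mapsto t_1$, consistent with the paper's base case $n=1$, so this does not affect the argument, only the remark about the $n=0$ case ``returning the initial conditions.''
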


\begin{proof}
We do the proof by induction. For $n = 1$, we have
\begin{equation}
\begin{aligned}
\label{induction:x1}
x(t_1) &= x(t_0)\cdot\frac{\bar{\kappa}+1}{\bar{\kappa}+1+b(q-1)t_0} \\
&= \frac{x(t_0)(x(t_0)+y(t_0))}{x(t_0)+y(t_0)(1+b(q-1)t_0)},
\quad \text{by the definition of}\, \bar{\kappa},	
\end{aligned}
\end{equation}	
and
\begin{equation}
\begin{aligned}
\label{induction:y1}
y(t_1) &= \frac{y(t_0)}{\xi_0}\cdot\frac{\bar{\kappa}+1}{\bar{\kappa}+1+b(q-1)t_0} \\
&= \frac{y(t_0)(1+b(q-1)t_0)(x(t_0)+y(t_0))}{(1+c(q-1)t_0)(x(t_0)+y(t_0)(1+b(q-1)t_0))},
\quad \text{by the definition of}\, \bar{\kappa} \,\text{and}\,~ \xi_0.
\end{aligned}
\end{equation}	
Both \eqref{induction:x1} and \eqref{induction:y1} are true from system
\eqref{quantum:explicit}. Now, let us state the inductive hypothesis 
that \eqref{solution} holds true for a certain $n = m$. 
We want to prove that it remains valid for $n = m+1$. Thus, 
\begin{equation*}
\begin{aligned}
x(t_{m+1}) &= x(t_0) \cdot\frac{\bar{\kappa}+1}{\bar{\kappa}+1+b(q-1)t_0}
\displaystyle \prod_{i=0}^{m-1} \frac{1 + \bar{\kappa}\prod_{j=0}^{i}\xi_j}{1 
+ b(q-1)q^{i+1}t_0 + \bar{\kappa}\prod_{j=0}^{i}\xi_j}\\
&= x(t_0) \cdot \frac{\bar{\kappa}+1}{\bar{\kappa}+1+b(q-1)t_0}\displaystyle \prod_{i=0}^{m-2} 
\frac{1 + \bar{\kappa}\prod_{j=0}^{i}\xi_j}{1 + b(q-1)q^{i+1}t_0 + \bar{\kappa}\prod_{j=0}^{i}\xi_j} \\
&\cdot \frac{1 + \bar{\kappa}\prod_{j=0}^{m-1}\xi_j}{1 + b(q-1)q^{m}t_0 + \bar{\kappa}\prod_{j=0}^{m-1}\xi_j} \\
&= x(t_m) \cdot \frac{1 + \bar{\kappa}\prod_{j=0}^{m-1}\xi_j}{1 + b(q-1)q^{m}t_0 
+ \bar{\kappa}\prod_{j=0}^{m-1}\xi_j},\,\text{by inductive hypothesis}.
\end{aligned}
\end{equation*}
Now, note that the first two equations of \eqref{solution}, 
that hold true by the inductive hypothesis, can be rewritten as
\begin{equation}
\label{induction:x0}
x(t_0) = \frac{x(t_n)}{\frac{\bar{\kappa} + 1}{\bar{\kappa} + 1 + b(q-1)t_0}
\displaystyle \prod_{i=0}^{n-2} \frac{1 + \bar{\kappa}
\prod_{j=0}^{i}\xi_j}{1 + b(q-1)q^{i+1}t_0 + \bar{\kappa}\prod_{j=0}^{i}\xi_j}}
\end{equation}
and
\begin{equation}
\label{induction:y0}
y(t_0) = \frac{y(t_n)\xi_0}{\frac{\bar{\kappa} + 1}{\bar{\kappa} + 1 + b(q-1)t_0}
\displaystyle \prod_{i=0}^{n-2} \left(\frac{1}{\xi_{i+1}}
\cdot\frac{1 + \bar{\kappa}\prod_{j=0}^i \xi_j}{1 + b(q-1)q^{i+1}t_0 
+ \bar{\kappa}\prod_{j=0}^i\xi_j}\right)}.
\end{equation}
Using \eqref{induction:x0} and \eqref{induction:y0}, 
and the definition of $\bar{\kappa}$, one can write that
\begin{equation}
\label{aux:k}
\begin{aligned}
\bar{\kappa} \prod_{j=0}^{m-1}\xi_j 
&= \frac{x(t_m) \prod_{i=0}^{m-2}\frac{1}{\xi_{i+1}}}{y(t_m)\xi_0} \cdot\prod_{j=0}^{m-1}\xi_j \\
&= \frac{x(t_m)}{y(t_m)\prod_{i=0}^{m-1} \xi_i}\cdot\prod_{j=0}^{m-1}\xi_j \\
&= \frac{x(t_m)}{y(t_m)}.  
\end{aligned}
\end{equation}
Thus, applying \eqref{aux:k}, $x(t_{m+1})$ becomes
\begin{equation}
\label{induction:xm1}	
\begin{aligned}
x(t_{m+1}) &= x(t_m) \cdot \frac{x(t_m) + y(t_m)}{x(t_m) + y(t_m)(1+b(q-1)q^{m}t_0)}\\
&= x(t_m) \cdot \frac{x(t_m) + y(t_m)}{x(t_m) + y(t_m)(1+b(q-1)t_m)}, 
\quad\text{because}\, q^mt_0 = t_m. 
\end{aligned}
\end{equation}
Moreover, 
\begin{equation*}
\begin{aligned}
y(t_{m+1}) &= y(t_0)  \cdot\frac{\bar{\kappa}+1}{\bar{\kappa}+1+b(q-1)t_0}
\displaystyle \prod_{i=0}^{m-1}\left( \frac{1}{\xi_{i+1}}
\cdot\frac{1 + \bar{\kappa}\prod_{j=0}^{i}\xi_j}{1 + b(q-1)q^{i+1}t_0 
+ \bar{\kappa}\prod_{j=0}^{i}\xi_j}\right)\\
&= y(t_0) \cdot \frac{\bar{\kappa}+1}{\bar{\kappa}+1+b(q-1)t_0}
\displaystyle \prod_{i=0}^{m-2} \left(\frac{1}{\xi_{i+1}}
\cdot\frac{1 + \bar{\kappa}\prod_{j=0}^{i}\xi_j}{1 + b(q-1)q^{i+1}t_0 
+ \bar{\kappa}\prod_{j=0}^{i}\xi_j}\right)\\
&\cdot \frac{1}{\xi_m}\cdot\frac{1 + \bar{\kappa}\prod_{j=0}^{m-1}\xi_j}{1 
+ b(q-1)q^{m}t_0 + \bar{\kappa}\prod_{j=0}^{m-1}\xi_j} \\
&= \frac{y(t_m)}{\xi_m} \cdot \frac{1 + \bar{\kappa}\prod_{j=0}^{m-1}\xi_j}{1 
+ b(q-1)q^{m}t_0 + \bar{\kappa}\prod_{j=0}^{m-1}\xi_j},\,
\text{by inductive hypothesis}.
\end{aligned}
\end{equation*}
Finally, given that $q^mt_0 = t_m$, and applying \eqref{aux:k} and 
the definition of $\xi_m$, it follows that
\begin{equation}
\label{induction:ym1}
y(t_{m+1}) = \frac{y(t_m)(1 + b(q-1)t_m)(x(t_m)+y(t_m))}{(1
+c(q-1)t_m)(x(t_m)+y(t_m)(1+b(q-1)t_m))}.
\end{equation}
We end by noting that both \eqref{induction:xm1} and \eqref{induction:ym1} are 
also in agreement with system \eqref{quantum:explicit}. 
Furthermore, since $N = x(t_n) + y(t_n) + z(t_n)$, we have
\begin{equation*}
z(t_n) = N - \left(x(t_0) + \frac{y(t_0)}{\xi_0}\displaystyle\prod_{i=0}^{n-2} 
\left(\frac{1}{\xi_{i+1}}\right)\right)
\frac{\bar{\kappa} + 1}{\bar{\kappa}+1+b(q-1)t_0}
\displaystyle\prod_{i=0}^{n-2} \frac{1 + \bar{\kappa}\prod_{j=0}^{i} \xi_j}{1 + b(q-1)q^{i+1}t_0 
+ \bar{\kappa}\prod_{j=0}^{i} \xi_j}.
\end{equation*}
The proof is complete. 
\end{proof}

Note that $x(t_n)$ and $y(t_n)$ of \eqref{solution} can be rewritten as
\begin{equation}
\label{xtn_simplified}
x(t_n) = x(t_0)\frac{\bar{\kappa} + 1}{\bar{\kappa} + 1 + b(q-1)t_0} 
\displaystyle \prod_{i=0}^{n-2} \frac{1}{1 + \frac{b(q-1)q^{i+1}t_0}{1 
+ \bar{\kappa}\prod_{j=0}^{i} \xi_j}},
\end{equation}
and 
\begin{equation}
\label{ytn_simplified}
y(t_n) = \frac{y(t_0)}{\xi_0} \cdot \frac{\bar{\kappa} + 1}{\bar{\kappa} 
+ 1 + b(q-1)t_0} \displaystyle \prod_{i=0}^{n-2} \left(\frac{1}{\xi_{i+1}} 
\cdot \frac{1}{1 + \frac{b(q-1)q^{i+1}t_0}{1 + \bar{\kappa}\prod_{j=0}^{i} \xi_j}}\right).
\end{equation}

The reproduction number $\mathcal{R}_0$ is one of the most important concepts 
in epidemiology. For system \eqref{continuous}, it is well-known that 
\begin{equation*}
\mathcal{R}_0 = \frac{b}{c}.
\end{equation*}
Since the underlying principles of disease transmission and recovery do not change, 
the expression for $\mathcal{R}_0$ remains valid for the quantum time system
\eqref{quantum} or \eqref{quantum:explicit}. In the following results, 
the focus is on the asymptotic stability of the equilibrium points 
considering the value of $\mathcal{R}_0$. 

 \begin{theorem}
\label{convergence_zero}
Assume that
\begin{equation}
\label{condition}
\frac{1}{\xi_{i+1}} \cdot 
\frac{1}{1 + \frac{b(q-1)q^{i+1}t_0}{1 + \bar{\kappa}\prod_{j=0}^{i} \xi_j}} < 1, 
\quad \text{for all} \\\ i \geq 0.
\end{equation}
If $\mathcal{R}_0 \geq 1$, then all solutions 
\eqref{solution} of system \eqref{quantum:explicit}
converge to the equilibrium point $(0,0,N)$. 
\end{theorem}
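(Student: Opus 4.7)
The plan is to read off the asymptotics of $x(t_n)$ and $y(t_n)$ directly from the simplified closed forms \eqref{xtn_simplified} and \eqref{ytn_simplified}, and then deduce the limit of $z(t_n)$ from the conservation identity $z(t_n)=N-x(t_n)-y(t_n)$ that was used in the previous theorem.

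First I would exploit $\mathcal{R}_0 = b/c \ge 1$, which forces $\xi_k \le 1$ for every $k$ and therefore $0<\prod_{j=0}^{i}\xi_j\le 1$. Consequently $1+\bar\kappa\prod_{j=0}^{i}\xi_j\in[1,\,1+\bar\kappa]$, and the quantity
\[
\beta_i:=\frac{b(q-1)q^{i+1}t_0}{1+\bar\kappa\prod_{j=0}^{i}\xi_j}
\]
satisfies $\beta_i\ge \frac{b(q-1)t_0}{1+\bar\kappa}\,q^{i+1}$. Since $q>1$, this lower bound grows geometrically, so $\beta_i\to+\infty$ at least exponentially fast in $i$. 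This is the main quantitative input.

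For the susceptible compartment I would then read \eqref{xtn_simplified} as a positive constant times $\prod_{i=0}^{n-2}(1+\beta_i)^{-1}$; because $\sum_{i\ge 0}\log(1+\beta_i)=+\infty$, this infinite product vanishes and $x(t_n)\to 0$. For the infected compartment, \eqref{ytn_simplified} displays $y(t_n)$ as a positive constant times $\prod_{i=0}^{n-2}\eta_i$ with $\eta_i:=\xi_{i+1}^{-1}(1+\beta_i)^{-1}$. Hypothesis \eqref{condition} gives $\eta_i<1$, which already forces monotone decay of $y(t_n)$; I would then upgrade this to $\prod\eta_i\to 0$ by observing that $1/\xi_{i+1}\to b/c$ stays bounded while $1/(1+\beta_i)\to 0$ exponentially, so $\eta_i\to 0$, and in particular $\eta_i\le 1/2$ for all sufficiently large $i$, which yields geometric decay of the product. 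The statement $z(t_n)\to N$ then follows immediately from conservation.

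The main obstacle, as I see it, is conceptual rather than computational: hypothesis \eqref{condition} on its own only makes the products monotone, and a product of factors each strictly less than one can very well converge to a positive limit (for instance $\prod(1-i^{-2})$). What actually drives convergence to the equilibrium is the quantitative estimate $\beta_i\gtrsim q^{i+1}$, which is a genuine feature of the geometric quantum grid $t_n=q^n t_0$ and has no analogue in the uniform-step discrete setting.
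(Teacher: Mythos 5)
Your argument is correct, and it takes a genuinely different route from the paper's. The paper splits into the cases $\mathcal{R}_0=1$ and $\mathcal{R}_0>1$, proves monotonicity of the factors $a_i$ and $\tilde a_i$ (the latter via $\xi_{i+1}<\xi_i<1$), and then bounds the partial products by powers of a single factor, namely $x(t_n)<C\,(a_0)^{n-1}$ and $y(t_n)<C\,(\tilde a_{n-2})^{n-1}$. You instead extract from $\xi_k\le 1$ the uniform quantitative estimate $\beta_i\ge \frac{b(q-1)t_0}{1+\bar\kappa}\,q^{i+1}\to\infty$ and conclude $\prod_i(1+\beta_i)^{-1}\to 0$ from the divergence of $\sum_i\log(1+\beta_i)$; for $y(t_n)$ you add that $1/\xi_{i+1}$ stays bounded by $b/c$, so $\eta_i\to 0$ and the product decays geometrically. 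This buys two things. First, it treats $b=c$ and $b>c$ uniformly, whereas the paper's $b=c$ case rests on the observation that each factor is less than one, which — exactly as your $\prod(1-i^{-2})$ example shows — is not by itself sufficient; the missing ingredient is precisely your $\beta_i\gtrsim q^{i+1}$. Second, your estimate for $y(t_n)$ is airtight, while the paper's bound $(\tilde a_{n-2})^{n-1}$ has a base depending on $n$ and is justified only by the claimed monotonicity $\tilde a_i<\tilde a_{i+1}<1$, from which $(\tilde a_{n-2})^{n-1}\to 0$ does not follow (and which is in tension with the fact, implicit in your argument, that $\tilde a_i\to 0$). Your closing remark that hypothesis \eqref{condition} only yields monotone decay, not convergence to zero, and that the true driver is the geometric growth of the grid $t_n=q^nt_0$, is accurate: your proof in fact never uses \eqref{condition}.
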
  

\begin{proof}
First, let us consider $\mathcal{R}_0 = 1$, that is, $b = c$. Then, 
\begin{equation*}
\xi_j = \frac{1 + c(q-1)q^jt_0}{1 + b(q-1)q^jt_0} = 1.
\end{equation*}
This means that 
\begin{equation*}
x(t_n) = x(t_0) \frac{\bar{\kappa} + 1}{\bar{\kappa} + 1 + b(q-1)t_0}  
\displaystyle \prod_{i=0}^{n-2} \frac{1}{1 + \frac{b(q-1)q^{i+1}t_0}{1 + \bar{\kappa}}}.
\end{equation*}
Clearly, $\frac{\bar{\kappa} + 1}{\bar{\kappa} + 1 + b(q-1)t_0} < 1$ 
and $\frac{1}{1 + \frac{b(q-1)q^{i+1}t_0}{1 + \bar{\kappa}}} < 1$ and, this way, 
one can easily conclude that $\underset{n \to \infty}{\lim}x_{n} =0$. The same 
inference can be drawn regarding $y(t_n)$. Moreover, since $z(t) = N - x(t) - y(t)$, 
it is clear that under these circumstances all solutions 
of \eqref{solution} converge to $(0,0,N)$.

Let us now consider the remaining case where $\mathcal{R}_0 > 1$, that is, $b > c$. 
This leads to $\xi_j  < 1$. Considering \eqref{xtn_simplified}, we can state $a_i$ as
\begin{equation}
\label{a_xtn}
a_i = \frac{1}{1 + \frac{b(q-1)q^{i+1}t_0}{1 + \bar{\kappa}\prod_{j=0}^{i} \xi_j}}, 
\quad i \geq 0.
\end{equation}
So, since $\xi_j < 1$ and $q > 1$, we have
\begin{equation}
\label{condition_ai}
1 > \frac{1}{1 + \frac{b(q-1)q^{i+1}t_0}{1 + \bar{\kappa}\prod_{j=0}^{i} \xi_j}} 
> \frac{1}{1 + \frac{b(q-1)q^{i+2}t_0}{1 + \bar{\kappa}\prod_{j=0}^{i+1} \xi_j}} 
\Rightarrow 1 > a_i > a_{i+1}, 
\quad i \geq 0.
\end{equation}
Furthermore, as $\frac{\bar{\kappa} + 1}{\bar{\kappa} + 1 + b(q-1)t_0} < 1$, 
we have
\begin{align*}
0 < x(t_n) &= x(t_0) \frac{\bar{\kappa} + 1}{\bar{\kappa} + 1 
+ b(q-1)t_0} a_0 a_1 a_2 \dots a_{n-2} \\
&< x(t_0) \frac{\bar{\kappa} + 1}{\bar{\kappa} + 1 + b(q-1)t_0}(a_0)^{n-1},
\end{align*}
leading to $\underset{n \to \infty}{\lim}x(t_n) =0$. 

Considering now \eqref{ytn_simplified}, we can define $\tilde{a}_i$ as
\begin{equation}
\label{a_ytn}
\tilde{a}_i = \frac{1}{\xi_{i+1}} \cdot 
\frac{1}{1 + \frac{b(q-1)q^{i+1}t_0}{1 + \bar{\kappa}\prod_{j=0}^{i} \xi_j}}, 
\quad i \geq 0.
\end{equation}
From the previous case, we already know that
\begin{equation*}
\frac{1}{1 + \frac{b(q-1)q^{i+1}t_0}{1 + \bar{\kappa}
\prod_{j=0}^{i} \xi_j}}, \quad i \geq 0,
\end{equation*}
tends to zero as $i$ grows.
Now we prove that $\xi_{i+1} < \xi_i < 1$ for all $i \geq 0$. 
For a certain $i\geq 0$, suppose that $\xi_{i+1} < \xi_i$. Then, 
\begin{equation*}
\frac{1 + c(q-1)q^{i+1}t_0}{1 + b(q-1)q^{i+1}t_0} 
< \frac{1 + c(q-1)q^{i}t_0}{1 + b(q-1)q^{i}t_0}, 
\end{equation*}
which is equivalent to
\begin{align*}
&b(q-1)q^it_0 - b(q-1)q^{i+1}t_0 + c(q-1)q^{i+1}t_0 - c(q-1)q^it_0 < 0 \\ 
&\Leftrightarrow (c-b)(q-1)^2q^it_0 < 0.
\end{align*}
This is always true since, by definition, $b > c$ and $q > 1$. 
This means that $\xi_{i+1} < \xi_i$ for all $i \geq 0$.
Combined with \eqref{condition} and \eqref{condition_ai}, this leads to 
\begin{equation*}
\frac{1}{\xi_{i+1}}\cdot\frac{1}{1 + \frac{b(q-1)q^{i+1}t_0}{1 
+ \bar{\kappa}\prod_{j=0}^{i} \xi_j}} < \frac{1}{\xi_{i+2}}
\cdot\frac{1}{1 + \frac{b(q-1)q^{i+2}t_0}{1 + \bar{\kappa}
\prod_{j=0}^{i+1} \xi_j}} < 1 \Rightarrow \tilde{a}_i < \tilde{a}_{i+1} < 1, 
\quad i \geq 0.
\end{equation*}
Therefore,
\begin{align*}
0 < y(t_n) &= \frac{y(t_0)}{\xi_0}\frac{\bar{\kappa} + 1}{\bar{\kappa} 
+ 1 + b(q-1)t_0} \tilde{a}_0 \tilde{a}_1 \tilde{a}_2 \dots \tilde{a}_{n-2}\\
&< \frac{y(t_0)}{\xi_0} \frac{\bar{\kappa} + 1}{\bar{\kappa} + 1 + b(q-1)t_0}(\tilde{a}_{n-2})^{n-1},
\end{align*}
leading to $\underset{n \to \infty}{\lim}y(t_n) =0$. Also, 
$\underset{n \to \infty}{\lim}z(t_n) = N$. Thus, all solutions of \eqref{solution} 
converge to the equilibrium point $(0,0,N)$ under the condition $b>c$.
\end{proof}	

To address the case when $\mathcal{R}_0 < 1$, we make use of the following lemma.

\begin{lemma}
\label{lemma}
Let $\mathcal{R}_0 < 1$, that is, $b < c$. Then, $\xi_j > 1$ for all $j\geq 0$. 
Moreover, considering \eqref{a_xtn}, then $a_{i} > a_{i+1}$ for all $i\geq 0$.
\end{lemma}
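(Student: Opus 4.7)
The plan is to settle the two assertions in turn, with the second being the substantive one. The claim $\xi_j > 1$ is immediate from the hypothesis $b<c$: since $q>1$ and $t_0>0$, we have $c(q-1)q^jt_0 > b(q-1)q^jt_0 > 0$, so adding $1$ to numerator and denominator preserves the strict inequality, yielding
$$\xi_j = \frac{1+c(q-1)q^jt_0}{1+b(q-1)q^jt_0} > 1$$
for every $j \geq 0$. This is a one-line computation.

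For the second claim, I would abbreviate $P_i := \prod_{j=0}^{i}\xi_j$ and rewrite $a_i$ in the equivalent form
$$a_i = \frac{1+\bar{\kappa}P_i}{1+\bar{\kappa}P_i + b(q-1)q^{i+1}t_0}.$$
Cross-multiplying the inequality $a_i > a_{i+1}$ and cancelling the common product $(1+\bar{\kappa}P_i)(1+\bar{\kappa}P_{i+1})$ that appears on both sides, the claim reduces to the scalar inequality
$$(q-1) + \bar{\kappa}P_i\bigl(q - \xi_{i+1}\bigr) > 0.$$
It then suffices to establish $\xi_{i+1} \leq q$ for each $i \geq 0$, because in that case the first summand is strictly positive and the second is nonnegative. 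Unwinding the definition of $\xi_{i+1}$, this last bound rearranges to $(c-bq)q^{i+1}t_0 \leq 1$, which is automatic whenever $c \leq bq$.

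The main obstacle lies in the residual regime $c > bq$ (equivalently $c/b > q$), where $\xi_j$ increases monotonically past $q$; the monotonicity itself comes from the same sign-flipped identity $(q-1)^2 q^i t_0 (c-b) > 0$ used in the proof of Theorem~\ref{convergence_zero}, now with the sign of $c-b$ reversed. In that regime one cannot simply drop the term $\bar{\kappa}P_i(\xi_{i+1}-q)$ and must argue directly that $\bar{\kappa}P_i(\xi_{i+1}-q) < q-1$, which I would attempt by induction on $i$ using the recursion $P_{i+1} = \xi_{i+1}P_i$ and, if needed, the standing hypothesis \eqref{condition} that the next theorem is likely to impose. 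Everything else is bookkeeping; the genuine difficulty of the lemma is packed into controlling $\bar{\kappa}P_i(\xi_{i+1}-q)$ against $q-1$ in this borderline case.
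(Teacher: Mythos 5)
Your treatment of the first claim and your reduction of $a_i > a_{i+1}$ to the inequality $(q-1) + \bar{\kappa}\left(\prod_{j=0}^{i}\xi_j\right)(q-\xi_{i+1}) > 0$ are both correct and coincide with the paper's own computation, as does your observation that the case $c \le bq$ follows at once from $\xi_{i+1} \le q$. The gap is the case you defer, $c > bq$ (i.e.\ $1 < q < c/b$), and it cannot be closed by the induction you sketch: writing $P_i = \prod_{j=0}^{i}\xi_j$, the required bound $\bar{\kappa}P_i(\xi_{i+1}-q) < q-1$ fails for all sufficiently large $i$ in that regime. Indeed $\xi_j$ increases to $c/b > 1$, so $P_i \to \infty$ geometrically, while $\xi_{i+1}-q \to c/b - q > 0$; hence the left-hand side tends to $+\infty$. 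Hypothesis \eqref{condition} does not rescue this either, since it constrains $\tilde{a}_i$ (which carries the extra factor $1/\xi_{i+1}$), not $a_i$. The regime is nonempty and even occurs for the paper's own illustrative parameters ($b=0.3$, $c=0.6$, $q=1.1$, $t_0=0.01$ give $c-bq=0.27>0$), so the monotonicity claim of the lemma is simply false there; your instinct that this is ``the genuine difficulty'' is right, but no amount of bookkeeping will finish it.

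For what it is worth, the paper's own proof does not survive this case either: it reduces $\xi_{i+1} < q$ to the $i$-dependent smallness condition $t_0 < 1/\bigl((c-bq)q^{i+1}\bigr)$, which any fixed $t_0>0$ violates once $i$ is large, and then concludes that ``the result holds for any $q>1$'' without justification. Note also that the failure of $a_i > a_{i+1}$ does not by itself invalidate Theorem~\ref{thm:5}: convergence of $x(t_n)$ to a positive limit only needs $\sum_i (1-a_i) < \infty$, which does hold when $q < c/b$ because $1-a_i = O\bigl((bq/c)^i\bigr)$. So the lemma's monotonicity statement is the wrong intermediate claim, not merely an unproved one.
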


\begin{proof}
Let us consider that $a_{i} > a_{i+1}, \forall i\geq 0$. This means that 
\begin{equation*}
\frac{b(q-1)q^{i+1}t_0}{1 + \bar{\kappa}\prod_{j=0}^{i}\xi_{j}} 
< \frac{b(q-1)q^{i+2}t_0}{1 + \bar{\kappa}\prod_{j=0}^{i+1}\xi_{j}},
\end{equation*}
which is equivalent to 
\begin{equation*}
q^{i+1}\left(1 + \bar{\kappa}\displaystyle \prod_{j=0}^{i+1}\xi_j\right) 
< q^{i+2}\left(1 + \bar{\kappa}\displaystyle \prod_{j=0}^{i}\xi_j\right)
\Leftrightarrow (1-q) + \bar{\kappa}\prod_{j=0}^{i}\xi_j(\xi_{i+1} - q) < 0.
\end{equation*}
Since $q > 1$, the previous relation is always true if $\xi_{i+1} - q < 0$, that is,
\begin{equation*}
\frac{1+c(q-1)q^{i+1}t_0}{1+b(q-1)q^{i+1}t_0} < q. 
\end{equation*}
Simplifying, we obtain that
\begin{equation}
\label{condition_q}
1 + (c-bq)(q-1)q^{i+1}t_0 - q < 0,
\end{equation}
which is always true provided 
$$
c - bq < 0 \Leftrightarrow q > \frac{c}{b}.
$$ 
On the other hand, \eqref{condition_q} is equivalent to 
\begin{equation}
\label{condition_2q}
(1-q)(1-(c-bq)q^{i+1}t_0) < 0.
\end{equation}
Since $q > 1$, \eqref{condition_2q} holds true if and only if 
$$
(1-(c-bq)q^{i+1}t_0) > 0, 
$$
which is equivalent to 
\begin{equation}
\label{condition_3q}
\frac{1}{(c-bq)q^{i+1}} > t_0 > 0.
\end{equation}
Finally, \eqref{condition_3q} is true if and only if
$$
c - bq > 0 \Leftrightarrow q < \frac{c}{b}.
$$
If $q = \frac{c}{b}$, from \eqref{condition_2q} we get
$$
1 - \frac{c}{b} < 0.
$$
Since, by hypothesis, $\mathcal{R}_0 < 1$, then $\frac{c}{b} > 1$,
and thus, the result holds for any $q > 1$. 
\end{proof}

\begin{theorem}
\label{thm:5}
Let $\mathcal{R}_0 < 1$. Then, under the conditions
of Lemma~\ref{lemma}, all solutions \eqref{solution} of system \eqref{quantum:explicit}
converge to the equilibrium $(\alpha,0,N-\alpha)$ for some $\alpha \in \, ]0,N]$. 
\end{theorem}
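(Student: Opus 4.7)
The plan is to combine the explicit formulas \eqref{xtn_simplified} and \eqref{ytn_simplified} with the asymptotic behavior of the auxiliary sequences $\xi_j$ and $a_i$, using as building blocks the inequalities $\xi_j > 1$ and $a_i > a_{i+1}$ furnished by Lemma~\ref{lemma}, together with the specific rate $\xi_j \to c/b$ forced by $\mathcal{R}_0 < 1$.

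First I would handle the $y$-coordinate. From \eqref{ytn_simplified}, since every $a_i$ lies in $(0,1)$, one has
\begin{equation*}
0 < y(t_n) < \frac{y(t_0)}{\xi_0}\cdot\frac{\bar{\kappa}+1}{\bar{\kappa}+1+b(q-1)t_0}\cdot\prod_{i=1}^{n-1}\frac{1}{\xi_i}.
\end{equation*}
Because $\xi_j = (1+c(q-1)q^j t_0)/(1+b(q-1)q^j t_0) \to c/b > 1$, the infinite product $\prod_{i \geq 1}\xi_i$ diverges, forcing $y(t_n)\to 0$.

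Second, I would treat $x(t_n) = x(t_0)\,C\,\prod_{i=0}^{n-2} a_i$ with $C = (\bar{\kappa}+1)/(\bar{\kappa}+1+b(q-1)t_0)$. This sequence is positive and, because each $a_i \in (0,1)$ by Lemma~\ref{lemma}, strictly decreasing, so it converges to some $\alpha \geq 0$. The crux is to exclude $\alpha = 0$, which is equivalent to showing $\sum_{i\geq 0}(1-a_i) < \infty$. I would use the estimate
\begin{equation*}
1 - a_i \leq \frac{b(q-1)q^{i+1}t_0}{1+\bar{\kappa}\prod_{j=0}^{i}\xi_j},
\end{equation*}
together with a geometric lower bound $\prod_{j=0}^{i}\xi_j \geq K(c/b)^{i+1}$ for some $K > 0$; the latter follows by factoring $\xi_j = (c/b)(1+\epsilon_j)$ and observing that $\epsilon_j = O(q^{-j})$ is summable, so the log-product converges. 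This yields $1 - a_i = O((qb/c)^{i+1})$, which is summable in the regime $q < c/b$ that is extracted within the proof of Lemma~\ref{lemma}. Consequently $\prod a_i$ converges to a strictly positive limit and $\alpha > 0$.

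Finally, $z(t_n) = N - x(t_n) - y(t_n) \to N - \alpha$, so $(x(t_n), y(t_n), z(t_n))$ converges to $(\alpha, 0, N-\alpha)$ with $\alpha \in\,]0, x(t_0)] \subset\,]0, N]$, as claimed. I expect the main obstacle to lie in the second step: Lemma~\ref{lemma} delivers the monotonicity of $a_i$ but not the quantitative decay of $1 - a_i$ required to keep $\prod a_i$ bounded away from zero, and the geometric lower bound on $\prod \xi_j$ (which implicitly pins down the admissible range of $q$) is the decisive ingredient.
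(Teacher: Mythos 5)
Your proposal is correct on the steps that matter, and on the decisive step it takes a genuinely different --- and substantially more rigorous --- route than the paper. The paper settles $\alpha>0$ by bounding $x(t_n)$ \emph{from below} by $x(t_0)\frac{\bar{\kappa}+1}{\bar{\kappa}+1+b(q-1)t_0}(a_{n-2})^{n-1}$ and observing that this lower bound tends to $0$, which of course says nothing about the positivity of the limit; your reformulation as the summability of $\sum_{i\ge 0}(1-a_i)$, settled via $1-a_i\le b(q-1)q^{i+1}t_0/(1+\bar{\kappa}\prod_{j=0}^{i}\xi_j)$ together with the geometric lower bound $\prod_{j=0}^{i}\xi_j\ge K(c/b)^{i+1}$ (valid since $\xi_j=(c/b)(1+O(q^{-j}))$), supplies exactly the argument the paper omits, and your estimate $1-a_i=O((qb/c)^{i+1})$ is right. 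Two remarks. First, the restriction $q<c/b$ your argument needs is not an artifact of the method: for $q\ge c/b$ the quantity $b(q-1)q^{i+1}t_0/(1+\bar{\kappa}\prod_{j=0}^{i}\xi_j)$ is eventually bounded away from $0$, so $a_i\not\to 1$, $\prod_i a_i\to 0$, and the orbit converges to $(0,0,N)$ rather than to a state with $\alpha>0$; the paper's Lemma~\ref{lemma} discusses the threshold $q=c/b$ but concludes ``for any $q>1$'', so reading ``the conditions of Lemma~\ref{lemma}'' as imposing $q<c/b$, as you do, is what makes the stated conclusion $\alpha\in\,]0,N]$ actually hold. Second, you never use the monotonicity $a_i>a_{i+1}$ that the lemma provides, only $a_i\in(0,1)$ --- which is just as well, because in the regime $q<c/b$ the same asymptotics show $a_i$ is eventually \emph{increasing}, so the lemma's monotonicity cannot hold for all $i$ precisely where the theorem's conclusion is true. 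Your treatment of $y(t_n)$ (using $\xi_i\to c/b>1$ to force divergence of $\prod_i\xi_i$, rather than the paper's bare $\xi_i>1$) and the final step for $z(t_n)$ are both fine.
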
 

\begin{proof}
Since $\mathcal{R}_0 < 1$, that is, $b < c$, then $\xi_j > 1$. Note that considering 
\eqref{a_xtn}, it is clear that $a_i < 1, \forall i \geq 0$. From Lemma~\ref{lemma}, 
we have $1 > a_i > a_{i+1}$, for any $q > 1$. So,
\begin{align*}
x(t_n) &= x(t_0) \frac{\bar{\kappa} + 1}{\bar{\kappa} + 1 + b(q-1)t_0} 
\displaystyle \prod_{i=0}^{n-2} a_i > x(t_0) \frac{\bar{\kappa} + 1}{\bar{\kappa} 
+ 1 + b(q-1)t_0} \displaystyle \prod_{i=0}^{n-2} a_{n-2} \\ \Rightarrow x(t_n) 
&>  x(t_0) \frac{\bar{\kappa} + 1}{\bar{\kappa} + 1 + b(q-1)t_0} (a_{n-2})^{n-1}. 
\end{align*}
Since $(a_{n-2})^{n-1} \longrightarrow 0$ as $n\to \infty$, 
then $x(t_n) = x(t_0) \frac{\bar{\kappa} + 1}{\bar{\kappa} + 1 + b(q-1)t_0} 
\displaystyle \prod_{i=0}^{n-2} a_i = \alpha$, meaning that 
$\underset{n \to \infty}{\lim}x(t_n) = \alpha$, with $\alpha \in \, ]0,N]$.
Having this, and recalling $\tilde{a}_i$ from \eqref{a_ytn}, 
it just remains left to verify that $\prod_{i=0}^{n-2} \frac{1}{\xi_{i+1}} \longrightarrow 0$. 
This is clearly true since in this case $\xi_{i+1} > 1$ for all $i\geq0$. Therefore, 
$\underset{n \to \infty}{\lim}y(t_n) = 0$. It is now trivial to conclude that 
$\underset{n \to \infty}{\lim}z(t_n) = N - \alpha$. From what we have shown, 
all solutions \eqref{solution} converge to the equilibrium $(\alpha,0,N-\alpha)$. 
\end{proof}


\section{Examples}
\label{sec3}

In this section, we present some illustrative examples confronting systems 
\eqref{continuous} and \eqref{quantum} under various parameter values. 
We start by plotting in Figure~\ref{Fig.1} the case where $b > c$, 
in which can be observed that the solution converges to the equilibrium point $(0,0,N)$ 
in both continuous and quantum cases. 

\begin{figure}[ht!]
\centering
\subfloat[Solution of \eqref{continuous}]{\includegraphics[scale=0.35]{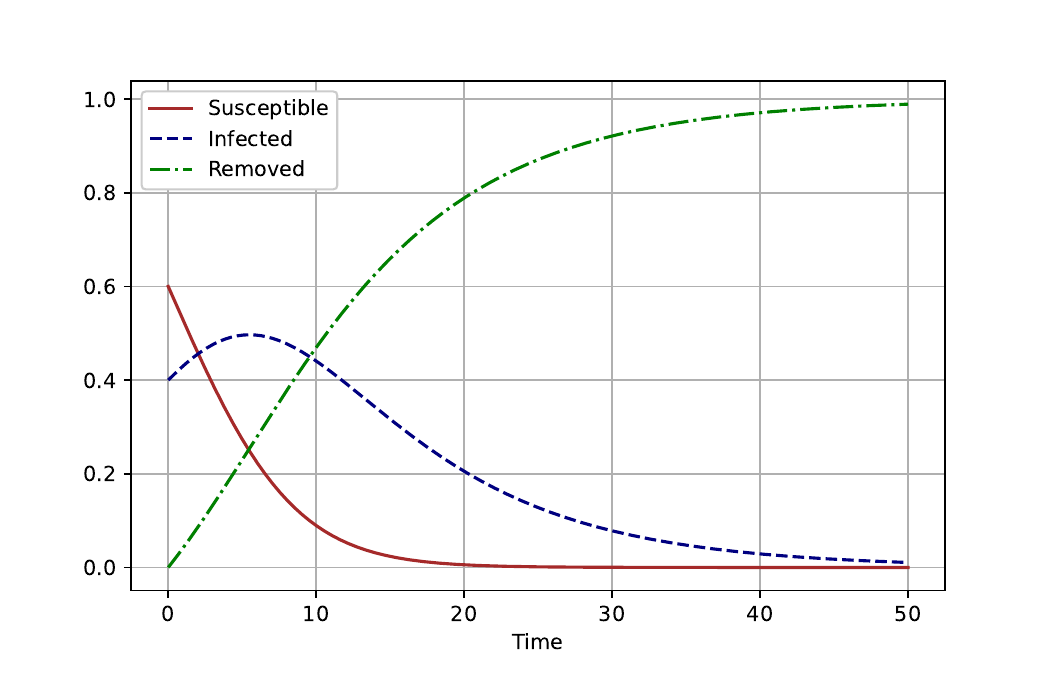}}
\subfloat[Solution of \eqref{quantum:explicit}]{\includegraphics[scale=0.35]{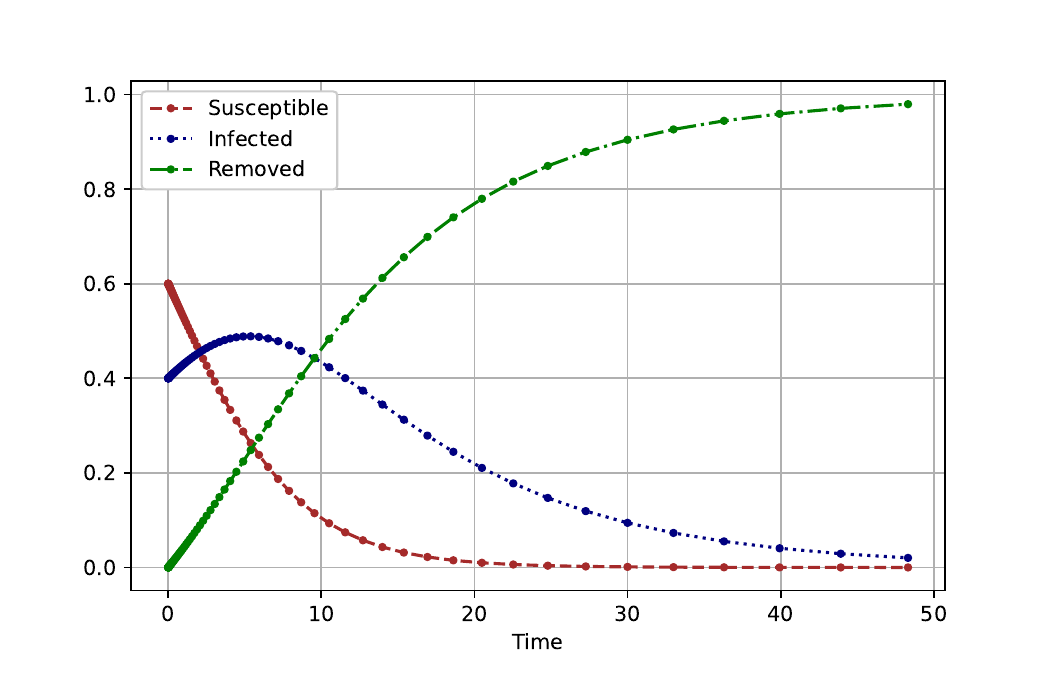}}
\caption{Susceptible, Infected and Removed individuals with $b = 0.3$, $c = 0.1$, 
$x(t_0) = 0.6$, $y(t_0) = 0.4$, $z(t_0) = 0$, $q = 1.1$, and $t_0 = 0.01$.}
\label{Fig.1}
\end{figure}

In contrast, Figure~\ref{Fig.2} examines the scenario where the parameters satisfy 
$b < c$. In this case, it is observable that the solution converges to the equilibrium 
($\alpha, 0, N - \alpha$), where $\alpha \in \mathbb{R}^+$. These examples align with 
the results obtained in Section~\ref{sec2}, reinforcing consistency between 
continuous and quantum SIR models. 

\begin{figure}[ht!]
\centering
\subfloat[Solution of \eqref{continuous}]{\includegraphics[scale=0.35]{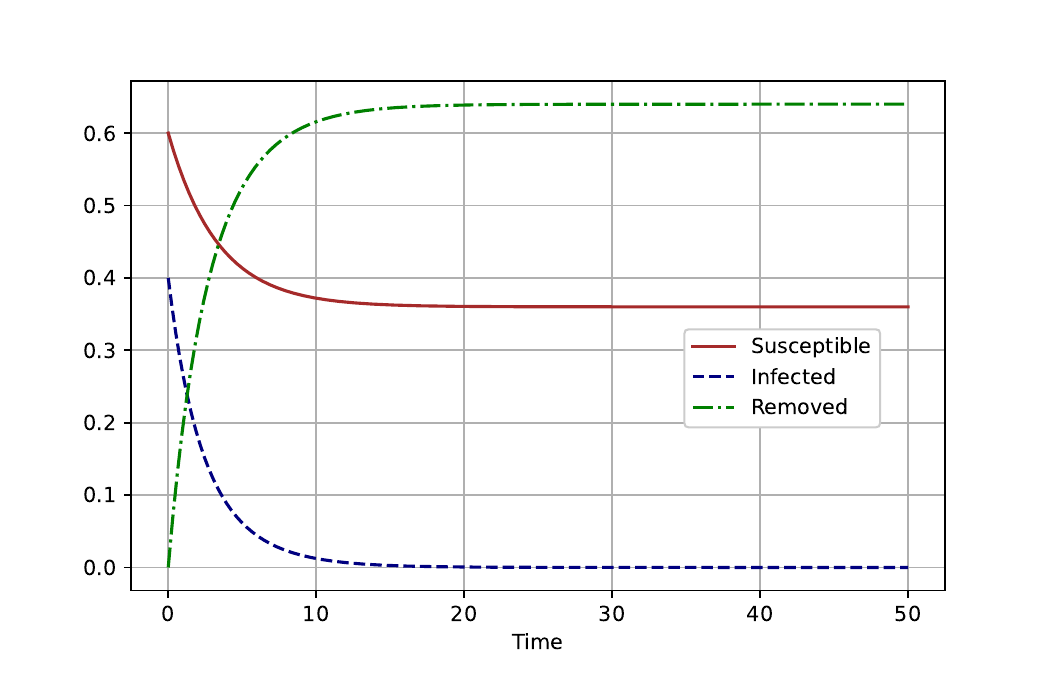}}
\subfloat[Solution of \eqref{quantum:explicit}]{\includegraphics[scale=0.35]{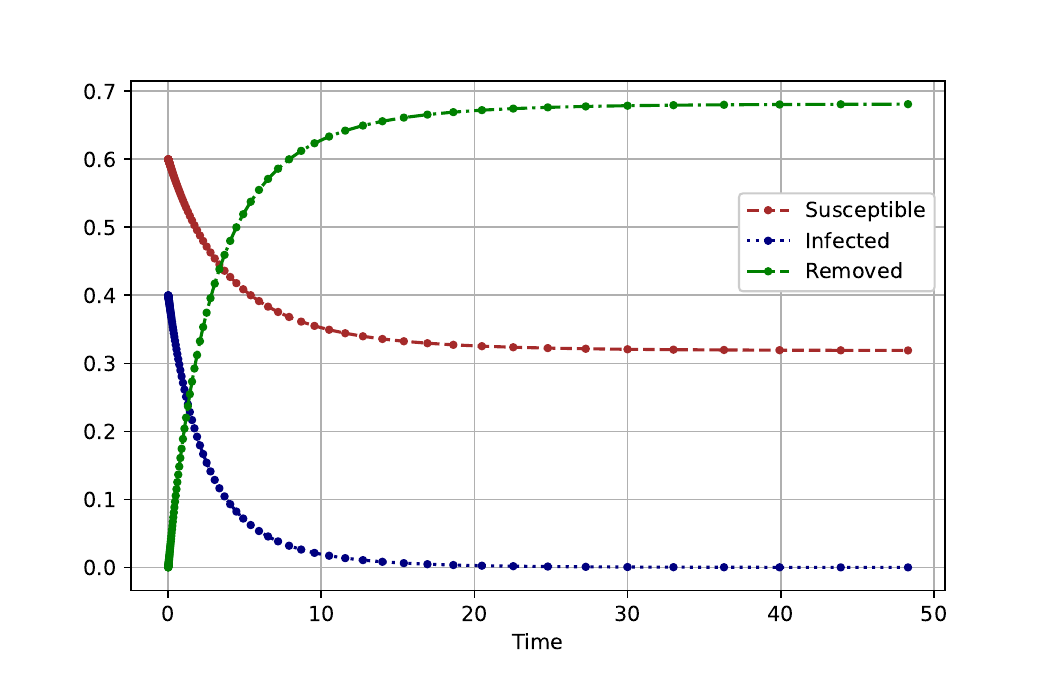}}
\caption{Susceptible, Infected and Removed individuals with $b = 0.3$, $c = 0.6$, 
$x(t_0) = 0.6$, $y(t_0) = 0.4$, $z(t_0) = 0$, $q = 1.1$, and $t_0 = 0.01$.}
\label{Fig.2}
\end{figure}

As expected, for $q \approx 1$, the obtained quantum solutions
closely approximate the solutions of the classical continuous SIR model 
(see Figures~\ref{Fig.1} and \ref{Fig.2}). Note that the parameter values 
and initial conditions used in the example shown in Figure~\ref{Fig.1}  are consistent 
with condition \eqref{condition}.

For higher values of $q$, in the specific case where $R_0 < 1$, 
the susceptible population $x$ continues to converge to a certain $\alpha \in \, ]0,N]$, 
although to an increasingly smaller positive value (see Figure~\ref{Fig.3}). 
This last result is in line with Lemma~\ref{lemma} and Theorem~\ref{thm:5}.  

\begin{figure}[ht!]
\centering
\includegraphics[scale=0.4]{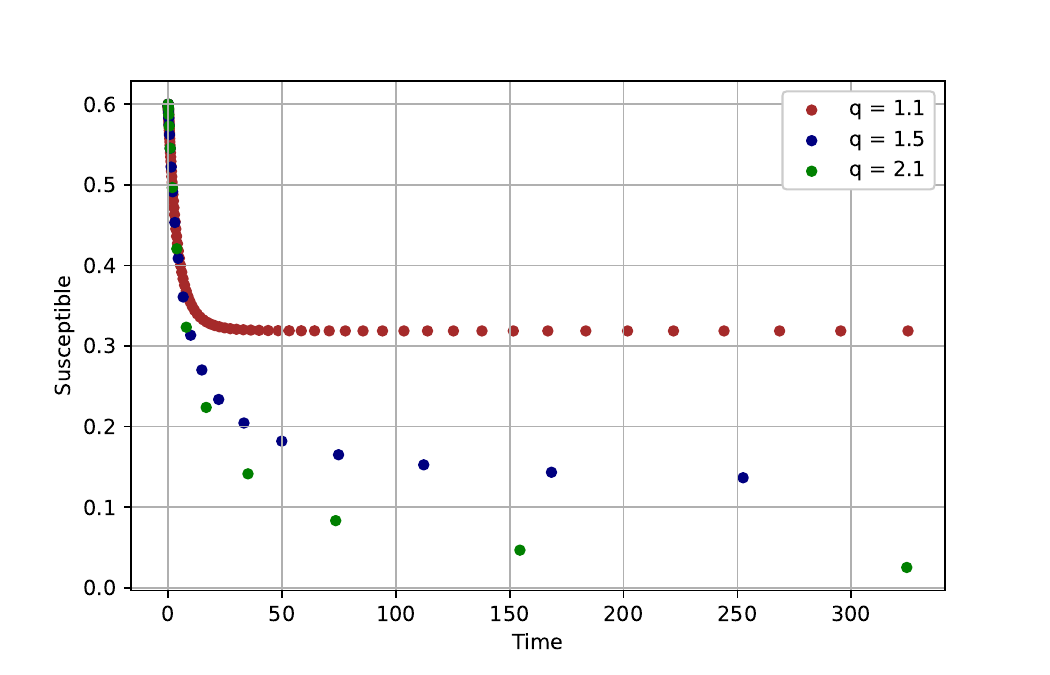}
\caption{Susceptible individuals for different values of $q$ and $b<c$.}
\label{Fig.3}
\end{figure}


\section{Conclusion}
\label{sec4}

In this work, we proposed a quantum SIR model considering the concept of $q$-derivative 
and a particular rule of Mickens' method. The equilibrium points were found, and 
the non-negativity and boundedness of solutions were proved. We derived 
the exact solution of the quantum SIR model and proved asymptotic stability. 
We end providing some examples that illustrate the exact solution obtained. 

While the assumption of constant rates is often not realistic in practice, 
having an exact solution for such a case can serve as a benchmark for comparing 
the behavior of the system under different parameter values. Our examples show that, 
for a $q \rightarrow 1^+$, our system behaves similarly to the continuous model. 
Moreover, our results highlight the potential of quantum time as an 
alternative framework for discrete modeling in epidemiology. Future work may include 
extending this approach to more complex compartmental models, as well as exploring its
applications in other scientific domains.


\section*{Declarations}

\subsection*{Conflict of interest}

The authors declare no conflicts of interest.


\subsection*{Data availability}

The manuscript has no associated data.


\subsection*{Funding} 

The authors were partially supported by 
the Portuguese Foundation for Science and Technology (FCT):
Lemos-Silva and Torres through the Center for Research and Development 
in Mathematics and Applications (CIDMA) of University of Aveiro (UA), 
project UID/04106/2025 (\url{https://doi.org/10.54499/UID/04106/2025}); 
Vaz through the Center of Mathematics and Applications (CMA) 
of Universidade da Beira Interior (UBI), 
project UID/00212/2025 (\url{https://doi.org/10.54499/UID/00212/2025}).
Lemos-Silva is also supported by the FCT PhD fellowship UI/BD/154853/2023
(\url{https://doi.org/10.54499/UI/BD/154853/2023}).


\subsection*{Acknowledgements}

The authors are grateful to two reviewers for reading
the initial submitted version carefully and for several 
comments and suggestions.



\end{document}